\documentclass[10pt]{amsart}
\usepackage{latexsym}
\usepackage{amsmath}
\usepackage{amsfonts}
\usepackage{amsthm}
\usepackage{amssymb}
\usepackage[T1]{fontenc}
\begin{document}

\title[functional inequality]{On a functional inequality stemming from the Gr\"uss-type inequalities}

\author{W\l{}odzimierz~Fechner}
\address{Institute of Mathematics, Lodz University of Technology, ul. W\'olcza\'nska 215, 90-924 \L\'od\'z, Poland}
\email{wlodzimierz.fechner@p.lodz.pl}

\newtheorem{thm}[]{Theorem}
\newtheorem{cor}[]{Corollary}
\newtheorem{lem}[]{Lemma}
\newtheorem{prop}[]{Proposition}
\theoremstyle{remark}
\newtheorem{rem}[]{Remark}
\newtheorem{ex}[]{Example}
\newcommand{\N}{\mathbb{N}}
\newcommand{\R}{\mathbb{R}}
\newcommand{\C}{\mathbb{C}}
\newcommand{\K}{\mathbb{K}}
\newcommand{\f}{\varphi}
\newcommand{\g}{\psi}
\renewcommand{\(}{\left(} \renewcommand{\)}{\right)}
\renewcommand{\[}{\left[} \renewcommand{\]}{\right]}

\keywords{Gr\"uss-type inequality, delta-convexity, Sincov equation}
\subjclass[2010]{26D15, 39B62, 39B82, 46C05}

\begin{abstract}
We study functional inequality of the form
$$|T(f,h)-T(f,g)T(g,h)| \leq F(f,g)F(g,h) -F(f,h)$$
where $T$ is a complex-valued functional and $F$ is a  real-valued map. Motivation for our studies comes from some generalizations of  Gr\"uss inequality. 
\end{abstract}

\maketitle

\section{Gr\"uss-type inequalities}

Let $[a, b]\subset \R$ be a non-degenerated interval and $f, g \in L([a,b])$ be Lebesgue integrable functions. Assume that there exist constants $m_f, m_g, M_f, M_g \in \R$ such that
\begin{equation}
\label{con}
m_f \leq f \leq M_f \quad \textrm{and} \quad m_g \leq g \leq M_g \qquad \textrm{a.e. on }[a,b].
\end{equation} 
Let us denote integral mean of a function $f$ by $I(f)$:
$$ I(f) = \frac{1}{b-a} \int\limits_a^b f(x) dx , \quad f \in L([a,b]).$$

The celebrated Gr\"uss inequality dates as early as 1935 and says that
\begin{equation}
\label{G}
\left| I(fg) -  I(f)I(g)     \right| \leq \frac14(M_f-m_f)(M_g-m_g).
\end{equation}
Nowadays several generalizations and refinements are known. For a comprehensive study of the topic and the list of references we refer the reader to a monograph by S.S. Dragomir \cite{D} and to recent papers by Z. Otachel \cite{Ot, Ot2}.

A Gr\"uss-type inequality is an inequality which provides an upper bound for expression of the form
$$\left|\left\langle f,h \right\rangle - \left\langle f,g \right\rangle \left\langle g,h \right\rangle \right|,$$
sometimes under additional assumption $\|g\|=1$, or 
$$\left|\left\langle f,h \right\rangle - \frac{1}{\|g\|^2}\left\langle f,g \right\rangle \left\langle g,h \right\rangle\right|,$$
where $f, g, h$ belong to an inner product space (see \cite[Theorems 15, 16, 17]{D}). Cases with inner product replaced by another functional are studied as well, including discrete versions of the original inequality. 

In paper \cite{ja} we dealt with functionals which satisfy
\begin{equation}
|T(f,h)-T(f,g)T(g,h)|\leq c \quad \textrm{for all } f, g, h
\label{pams}
\end{equation}
with some constant $c\geq 0$. Research of \cite{ja} was motivated by Richard's inequality, which says that particular solutions of \eqref{pams} are functionals of the form
\begin{equation}
T(f,g) = \frac{\left\langle f,g \right\rangle}{\|f\|\cdot \|g\|}.
\label{T}
\end{equation}
In the main result of \cite{ja} we proved that every unbounded real-valued solution of \eqref{pams} is a solution of Sincov equation
\begin{equation}
T(f,h)=T(f,g)T(g,h) \quad \textrm{for all } f, g, h.
\label{S}
\end{equation}
Consequently, $T$ has the representation 
$$T(f,g) = \frac{\Phi(f)}{\Phi(g)} \quad \textrm{for all } f, g$$
with some arbitrary never-vanishing real-valued mapping $\Phi$ (see D. Gronau \cite[Theorem]{G}). Therefore, from the mail result of \cite{ja} it follows that no generalization of Richard's inequality such that the functional $T$ is replaced by an unbounded one  is possible.

The purpose of the present work is to deal with a more general problem, with the right-hand side of \eqref{pams} modified so that it can depend on $f, g$ and $h$ (like in the case of Gr\"uss inequality and its generalizations). Moreover, we will study additive analogue of the original problem. In some proofs we will utilize our results from \cite{ja2} on Sincov's inequalities on topological spaces.

\section{Multiplicative functional inequality}

Assume that $X$ is a nonempty set and $T$ and $F$ are scalar mappings acting on the product $X \times X$. We will study the following functional inequality:
\begin{equation}
|T(f,h)-T(f,g)T(g,h)|\leq F(f,g)F(g,h)-F(f,h).
\label{main}
\end{equation}

Clearly, if $c\geq 0$ is a fixed constant and one takes as $F$ constant map equal to $1/2(1+ \sqrt{1+4c})$, then \eqref{main} reduces to \eqref{pams}. 

Present study is meant to fall in line with research initiated by the notion of delta-convexity by L. Vesel\'y, L. Zaj\'i\v{c}ek \cite{VZ} and then continued by several authors. Dissertation \cite{VZ} is mainly devoted to inequality
$$\left\|  F\( \frac{x+y}{2} \) - \frac{F(x)+F(y)}{2}       \right\| \leq  \frac{f(x)+f(y)}{2} - f\( \frac{x+y}{2} \) .$$
Continuous solutions $F$ of this inequality are called delta-convex. One of theorems of \cite{VZ} says that every delta-convex function can be written as a difference of two convex functionals.
This result provided a motivation for a study of several related problems. In particular, R. Ger \cite{G} studied inequality
$$\left\|  F\( {x+y} \) - F(x)F(y)   \right\| \leq  f(x)f(y) - f\( {x+y} \),$$
using the term delta-exponential map for $F$. More recently, A. Olbry\'s dealt with delta $(s,t)$-convex mappings \cite{O1}, delta Schur-convex mappings \cite{O2} and delta-subadditive and delta-superadditive mappings \cite{O3}.

We begin with an observation that inequality \eqref{main} behaves in a way similar to other inequalities motivated by the notion of delta-convexity. Proposition \ref{p1} below is an analogue to \cite[Proposition 1]{G}.

\begin{prop}\label{p1}
Assume that $X$ is a nonempty set and $T\colon X \times X \to [0, + \infty) $ and $F\colon X \times X \to [0, + \infty)$ satisfy inequality \eqref{main} for all $f, g, h \in X$. If we denote $H=T+F$, then
\begin{equation}
\label{multI}
H(f,h)\leq H(f,g)H(g,h), \quad f, g, h \in X.
\end{equation}
\end{prop}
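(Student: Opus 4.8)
The plan is to start from inequality \eqref{main} and massage the left-hand side using the triangle inequality to reintroduce $T(f,g)T(g,h)$ with the correct sign. Since $T\geq 0$ everywhere, we have $T(f,h)-T(f,g)T(g,h)\leq |T(f,h)-T(f,g)T(g,h)|$, so \eqref{main} immediately yields
$$T(f,h)\leq T(f,g)T(g,h) + F(f,g)F(g,h) - F(f,h).$$
Adding $F(f,h)$ to both sides gives
$$H(f,h) = T(f,h)+F(f,h) \leq T(f,g)T(g,h) + F(f,g)F(g,h).$$
So the task reduces to showing that the right-hand side here is bounded above by $H(f,g)H(g,h) = \bigl(T(f,g)+F(f,g)\bigr)\bigl(T(g,h)+F(g,h)\bigr)$.

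Expanding the product $H(f,g)H(g,h)$, the difference $H(f,g)H(g,h) - T(f,g)T(g,h) - F(f,g)F(g,h)$ equals the cross terms $T(f,g)F(g,h) + F(f,g)T(g,h)$. Both of these are nonnegative because $T$ and $F$ take values in $[0,+\infty)$, so the inequality follows. This is essentially the only step, and it is entirely routine; the sign conditions $T,F\geq 0$ are exactly what make the cross terms work in our favour, so there is no real obstacle here. One should, however, be slightly careful to note that nothing is being divided and no terms are being dropped that could have an unfavourable sign — the argument is a clean chain of inequalities valid for every triple $f,g,h\in X$, mirroring the structure of the analogous result \cite[Proposition 1]{G} for delta-exponential maps, where an additive inequality is converted into a submultiplicative one by the same device of combining the two pieces into a single function.
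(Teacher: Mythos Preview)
Your proof is correct and follows essentially the same approach as the paper: both arguments reduce to the observation that $H(f,h)\leq T(f,g)T(g,h)+F(f,g)F(g,h)$ and that the difference $H(f,g)H(g,h)-\bigl(T(f,g)T(g,h)+F(f,g)F(g,h)\bigr)$ equals the nonnegative cross terms $T(f,g)F(g,h)+F(f,g)T(g,h)$. The paper reaches these cross terms by substituting $T=H-F$ and expanding, whereas you expand $(T+F)(T+F)$ directly, but the algebra and the use of $T,F\geq 0$ are identical. (One cosmetic remark: the inequality $x\leq|x|$ in your first step does not actually require $T\geq 0$; that hypothesis is only used, as you correctly note later, to control the sign of the cross terms.)
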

\begin{proof}
Fix $f, g, h \in X$; we have by \eqref{main}
\begin{align*}
H(f,h) &= T(f,h) + F(f,h) \leq T(f,g)T(g,h) + F(f,g)F(g,h) \\&= [H(f,g) - F(f,g) ][H(g,h) - F(g,h)] + F(f,g)F(g,h)\\&= H(f,g)H(g,h) - H(f,g)F(g,h) - F(f,g)H(g,h) + 2F(f,g)F(g,h)\\&= H(f,g)H(g,h) - T(f,g)F(g,h) - F(f,g)T(g,h)  \leq H(f,g)H(g,h).
\end{align*}
\end{proof}

One can apply a result from \cite{ja2} in order to exclude cases when $F$ attains zero. Indeed, by \cite[Proposition 2]{ja2}, if $F$ has a zero, then $F=0$ on $X \times X$  and, consequently, $T$ solves Sincov equation \eqref{S}. Therefore, from now on we will restrict ourselves to the case when $F$ is positive.

In our next result we describe solutions of \eqref{main} which satisfy an additional assumption.

\begin{thm}
Assume that $X$ is a nonempty set and $T\colon X \times X \to \C $ and $F\colon X \times X \to (0, + \infty)$ satisfy inequality \eqref{main} for all $f, g, h \in X$. If there exists some $f, g \in X$ such that the map 
\begin{equation}
X \ni k  \to \frac{T(g,k)}{F(f,k)}
\label{map}
\end{equation}
 is unbounded, then
$T$ solves Sincov equation \eqref{S} for all $f, g, h \in X$. 
\end{thm}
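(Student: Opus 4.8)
First I would use Proposition~\ref{p1} together with the reduction already noted: since $F>0$, the map $H=T+F$ (where now $T$ takes complex values, so one should be slightly careful and instead work with $|T|$) satisfies a Sincov-type inequality. The natural route is to exploit \cite{ja2} on Sincov's inequalities: the inequality $H(f,h)\leq H(f,g)H(g,h)$ from \eqref{multI}, under an unboundedness hypothesis on a suitable section, should force equality, i.e. $H$ solves the Sincov \emph{equation} \eqref{S}. The hypothesis that $k\mapsto T(g,k)/F(f,k)$ is unbounded is tailored so that the relevant ratio built from $H$ is unbounded, which is precisely the kind of condition under which results of \cite{ja2} upgrade a Sincov inequality to the Sincov equation.

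Concretely, I would proceed as follows. Step 1: record from the remark after Proposition~\ref{p1} that we may assume $F>0$ everywhere. Step 2: from \eqref{main}, estimate $|T(f,h)|\leq |T(f,g)||T(g,h)| + F(f,g)F(g,h) - F(f,h)$, and add $F(f,h)$ to both sides to get a clean submultiplicative inequality for $H := |T| + F$, namely $H(f,h)\leq H(f,g)H(g,h)$ for all $f,g,h$. Step 3: translate the unboundedness of $k\mapsto T(g,k)/F(f,k)$ into unboundedness of $k\mapsto H(g,k)$ divided by something controlled, so that the hypothesis of the relevant proposition of \cite{ja2} is met; conclude that $H$ satisfies the Sincov \emph{equation}. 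Step 4: unwind what equality in Step 2 forces: tracing back through the computation in the proof of Proposition~\ref{p1} (now with $|T|$ in place of $T$), equality $H(f,h)=H(f,g)H(g,h)$ can hold only if the discarded nonnegative terms $|T(f,g)|F(g,h)$ and $F(f,g)|T(g,h)|$ vanish, which — since $F>0$ — forces $|T(f,g)|=0$ and $|T(g,h)|=0$ for the relevant triples, or else forces $|T(f,h)| = |T(f,g)||T(g,h)|$ \emph{and} $F(f,h)=F(f,g)F(g,h)$ simultaneously. Step 5: feed $F(f,h)=F(f,g)F(g,h)$ back into \eqref{main}: the right-hand side becomes $0$, hence $T(f,h)=T(f,g)T(g,h)$, i.e. the Sincov equation \eqref{S}, and then propagate this from the special $f,g$ in the hypothesis to all triples using the multiplicativity just obtained.

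The main obstacle I anticipate is Step 4 — extracting the right rigidity statement from equality in the submultiplicative inequality when $T$ is complex-valued. The chain in Proposition~\ref{p1} was written for nonnegative $T$; replacing $T$ by $|T|$ loses the identity $T(f,g)T(g,h)=[H(f,g)-F(f,g)][H(g,h)-F(g,h)]$, because in general $|T(f,g)T(g,h)| \neq |T(f,g)|\,|T(g,h)|$ is false but the cross terms no longer telescope exactly; one must instead argue that equality in $|T(f,h)| \le |T(f,g)|\,|T(g,h)| + F(f,g)F(g,h)-F(f,h)$ combined with $H(f,h)=H(f,g)H(g,h)$ pins down $F(f,h)=F(f,g)F(g,h)$, after which \eqref{main} itself does the remaining work. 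I expect the unboundedness hypothesis is used exactly to guarantee that enough such equality triples exist, via \cite{ja2}, and that a short separate argument then spreads the conclusion to all of $X\times X$.
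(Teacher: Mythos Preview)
Your plan has a genuine gap at Steps~3--4, and it is not a detail that can be patched within the proposed framework.

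First, there is no result in \cite{ja2} (nor is one true in general) saying that an unbounded solution of the multiplicative Sincov \emph{inequality} $H(f,h)\leq H(f,g)H(g,h)$ must solve the Sincov \emph{equation}. A simple counterexample: take any unbounded metric $d$ on $X$ and set $H(f,g)=e^{d(f,g)}$; then $H$ satisfies the inequality, $H$ is unbounded, but $H(f,g)H(g,f)=e^{2d(f,g)}\neq 1=H(f,f)$, so $H$ never satisfies the Sincov equation. The result from \cite{ja} you may have in mind concerns the \emph{approximate} Sincov equation $|T(f,h)-T(f,g)T(g,h)|\leq c$, which is a two-sided bound and a different object.

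Second, even granting Step~3, Step~4 collapses. In the chain
\[
H(f,h)=|T(f,h)|+F(f,h)\ \leq\ |T(f,g)|\,|T(g,h)|+F(f,g)F(g,h)\ \leq\ H(f,g)H(g,h),
\]
equality on the right forces the cross terms $|T(f,g)|F(g,h)+F(f,g)|T(g,h)|$ to vanish; since $F>0$ this gives $T(f,g)=T(g,h)=0$ for \emph{all} triples, hence $T\equiv 0$, contradicting the unboundedness hypothesis. So $H=|T|+F$ can never satisfy the Sincov equation here, and the ``or else'' branch you describe (separate equalities $|T(f,h)|=|T(f,g)||T(g,h)|$ and $F(f,h)=F(f,g)F(g,h)$) is not what equality in this chain yields.

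The paper proceeds along a completely different route. It first upgrades the hypothesis to: for \emph{every} pair $f,g$ the map $k\mapsto T(g,k)/F(f,k)$ is unbounded. It then introduces $\Gamma(f,g)=F(f,g)F(g,f)-1$ and shows $F(f,g)F(g,h)-F(f,h)\leq \Gamma(f,g)F(f,h)$. A four-variable telescoping estimate based on \eqref{main} gives
\[
|T(h,k)|\cdot|T(f,h)-T(f,g)T(g,h)|\ \leq\ [\Gamma(f,g)+\Gamma(f,h)]\,F(f,k)+|T(f,g)|\,\Gamma(g,h)\,F(g,k),
\]
and dividing by $|T(h,k)|$ one lets $k$ vary so that $F(f,k)/|T(h,k)|$ and $F(g,k)/|T(h,k)|$ tend to $0$, forcing $T(f,h)=T(f,g)T(g,h)$. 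No appeal to \cite{ja2} or to $H=|T|+F$ is made.
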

\begin{proof}
We begin with strengthening the unboundedness assumption. We claim that for every $f, g \in X$ the map \eqref{map} is unbounded. Note that \eqref{main} implies immediately the inequality
\begin{equation}
\label{F}
F(f,h)\leq F(f,g)F(g,h), \quad f, g, h \in X.
\end{equation}
By \eqref{F} we have
$$\frac{|T(g,k)|}{F(f,k)}\leq  \frac{|T(g,k)|}{F(h,k)} F(h,f), \quad f, g, h, k \in X.$$
One can see that if the left-hand side is unbounded as a variable of $k$ with $f,g$ kept fixed , then the fraction on the right-hand side is unbounded with every $h$. Next, from \eqref{main} we get
$$\left| T(g,k) - T(g,f)T(f,k) \right|\leq F(g,f)F(f,k)-F(g,k)\leq F(g,f)F(f,k), \quad f, g, k \in X;$$
therefore
$$\left| \frac{T(g,k)}{F(f,k)} - T(g,f)\frac{T(f,k)}{F(f,k)} \right|\leq F(g,f)\quad f, g, k \in X.$$
Thus, if for fixed $f,g$ the first fraction is unbounded, then so is the second one. On joining both observations our first claim follows.

Define an auxiliary functional $\Gamma\colon X \times X \to \R$ as
$$\Gamma(f, g) = F(f,g)F(g,f) - 1, \quad f,g \in X.$$ 
Using inequality \eqref{F} for arbitrary  $f, g, h \in X$  we get
$$F(g, h) \leq F(g, f)F(f, h),$$
so, by positivity of $F$
$$\frac{F(f,g)F(g,h)}{F(f,h)}\leq F(f,g)F(g,f) = \Gamma(f, g)+1.$$
Thus
\begin{equation}\label{gamma}
F(f,g)F(g,h) - F(f,h) \leq \Gamma(f, g)F(f,h), \quad f, g, h \in X.
\end{equation}

Take arbitrary $f, g, h, k \in X$. By \eqref{main} and \eqref{gamma} we obtain
$$|T(f,k) - T(f,h)T(h,k)|\leq   F(f,h)F(h,k)-F(f,k) \leq \Gamma(f, h) F(f,k)$$
and
$$|T(f,k) - T(f,g)T(g,k)|\leq F(f,g)F(g,k)-F(f,k)\leq \Gamma(f, g) F(f,k),$$
therefore
\begin{equation}\label{1}
|  T(f,h)T(h,k)-T(f,g)T(g,k)|\leq [\Gamma(f, g) + \Gamma(f, h)] F(f,k). 
\end{equation}
Using \eqref{1}, \eqref{main} and then \eqref{gamma}  we arrive at
\begin{align*}
| T(h,k) |&\cdot |T(f,h)-T(f,g)T(g,h)| = |T(f,h)T(h,k)-T(f,g)T(g,h)T(h,k)|\\&\leq [\Gamma(f, g) + \Gamma(f, h)] F(f,k)+ | T(f,g)T(g,k)- T(f,g)T(g,h)T(h,k)|\\&\leq  [\Gamma(f, g) + \Gamma(f, h)] F(f,k)+ | T(f,g)|[F(g,h)F(h,k) - F(g,k)] \\&\leq [\Gamma(f, g) + \Gamma(f, h)] F(f,k)+ | T(f,g)| \Gamma(g, h)\cdot F(g,k).
\end{align*}
From this we derive
$$
|T(f,h)-T(f,g)T(g,h)| \leq  [\Gamma(f, g) + \Gamma(f, h)]\frac{F(f,k)}{| T(h,k) |}   + |T(f,g)| \Gamma(g, h)\frac{ F(g,k)}{| T(h,k) |}  .
$$
Utilizing our preliminary observation we see that the two fractions can be arbitrary small while $f, g, h$ are kept fixed. Thus we derive that the left-hand side is equal to zero.
\end{proof}

\section{Additive functional inequality}

In this section we provide analogues to the foregoing result in additive case. It turns out that the situation is much easier and more symmetric.  Assume that $X$ is a nonempty set and $T$ and $F$ are scalar mappings acting on $X \times X$. We will deal with the  functional inequality:
\begin{equation}
|S(f,h)-S(f,g) -S(g,h)|\leq G(f,g)+G(g,h)-G(f,h).
\label{add}
\end{equation}
The following result is straightforward to verify.

\begin{prop}\label{p2}
Assume that $X$ is a nonempty set and $S\colon X \times X \to \R$ and $G\colon X \times X \to \R$ satisfy inequality \eqref{add} for all $f, g, h \in X$. If we denote $H_1=G+S$ and $H_2=G-S$, then
\begin{equation}
\label{addI}
H_i(f,h)\leq H_i(f,g)+ H_i(g,h), \quad f, g, h \in X, \, i=1, 2,
\end{equation}
 $S=\frac12H_1 - \frac12H_2$ and $G=\frac12H_1 + \frac12H_2$.
\end{prop}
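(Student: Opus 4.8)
The plan is to prove Proposition~\ref{p2} by direct computation, exploiting the linearity of the defining relations in contrast to the multiplicative case. The key observation is that \eqref{add} is equivalent to the two-sided estimate
\begin{equation*}
-[G(f,g)+G(g,h)-G(f,h)] \leq S(f,h)-S(f,g)-S(g,h) \leq G(f,g)+G(g,h)-G(f,h),
\end{equation*}
and each of these two inequalities, after rearranging, yields one of the subadditivity conditions in \eqref{addI}. This is the routine heart of the argument, and I expect no genuine obstacle — the ``hard part'' is merely bookkeeping with signs.

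Concretely, first I would fix arbitrary $f, g, h \in X$ and write out the right-hand inequality of the displayed two-sided estimate: moving all $S$-terms to the left and all $G$-terms to the right gives
\begin{equation*}
[S(f,h)+G(f,h)] \leq [S(f,g)+G(f,g)] + [S(g,h)+G(g,h)],
\end{equation*}
which is exactly $H_1(f,h)\leq H_1(f,g)+H_1(g,h)$. Next I would take the left-hand inequality of the two-sided estimate, i.e. $S(f,g)+S(g,h)-S(f,h)\leq G(f,g)+G(g,h)-G(f,h)$, and rearrange similarly to obtain
\begin{equation*}
[G(f,h)-S(f,h)] \leq [G(f,g)-S(f,g)] + [G(g,h)-S(g,h)],
\end{equation*}
which is $H_2(f,h)\leq H_2(f,g)+H_2(g,h)$. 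Since $f,g,h$ were arbitrary, \eqref{addI} holds for $i=1,2$.

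Finally, the identities $S=\frac12 H_1-\frac12 H_2$ and $G=\frac12 H_1+\frac12 H_2$ follow immediately by substituting $H_1=G+S$ and $H_2=G-S$ and simplifying; this is trivial and can be stated without computation. Thus the proposition reduces entirely to splitting the absolute-value inequality \eqref{add} into its two constituent inequalities and relabelling, and there is no step that presents a real difficulty — which is precisely the point the author makes in remarking that the additive case is ``much easier and more symmetric'' than the multiplicative one.
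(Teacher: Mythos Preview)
Your proof is correct and is precisely the straightforward verification the paper has in mind; the paper in fact omits the proof entirely, merely remarking that the result is ``straightforward to verify.'' Your splitting of \eqref{add} into its two constituent inequalities and rearranging each into the corresponding subadditivity statement for $H_1$ and $H_2$ is exactly the intended argument.
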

In particular, every mapping $S$ satisfying \eqref{add} can be written as a difference of two solutions of inequality \eqref{addI}. The converse implication is also true and the proof is omitted as straightforward.
\begin{prop}\label{p3}
Assume that $X$ is a nonempty set and $H_i\colon X \times X \to \R$ for $i=1, 2$ satisfy inequality \eqref{addI} for all $f, g, h \in X$. 
If we denote $S=H_1+H_2$ and $G =H_1-H_2$, then $S$ and $G$ satisfy inequality \eqref{add}.
\end{prop}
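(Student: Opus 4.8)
The plan is to fix an arbitrary triple $f, g, h \in X$ and to reduce \eqref{add} to a single elementary inequality between two nonnegative numbers. For $i = 1, 2$ I would introduce the subadditivity defect
$$\delta_i := H_i(f,g) + H_i(g,h) - H_i(f,h),$$
which is nonnegative for every triple precisely because $H_i$ satisfies \eqref{addI}. These two quantities carry all the information that enters the estimate.

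First I would expand the left-hand side of \eqref{add}. Since $S = H_1 + H_2$ is linear in $H_1$ and $H_2$, the expression $S(f,h) - S(f,g) - S(g,h)$ splits into the analogous combinations for $H_1$ and $H_2$, which equal $-\delta_1$ and $-\delta_2$ respectively; hence it equals $-(\delta_1 + \delta_2)$, and taking the modulus gives the left-hand side $\delta_1 + \delta_2$ by the nonnegativity of the defects. In the same way, since $G = H_1 - H_2$, the right-hand side $G(f,g) + G(g,h) - G(f,h)$ collapses to the corresponding signed combination of $\delta_1$ and $\delta_2$. After this substitution \eqref{add} becomes an inequality involving only $\delta_1$ and $\delta_2$, and it is settled at once by the triangle inequality for real numbers together with $\delta_1, \delta_2 \ge 0$. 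Since $f, g, h$ were arbitrary, this shows that $S$ and $G$ satisfy \eqref{add}.

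The computation is entirely routine; the only point that calls for care is the handling of the absolute value on the left of \eqref{add}, where one must use \eqref{addI} to fix the sign of the inner combination before discarding the modulus, so that both sides are expressed consistently in terms of the nonnegative defects $\delta_1$ and $\delta_2$. No continuity or additional structure on $X$ is needed, and this is why the converse may be recorded as straightforward: it is a purely algebraic consequence of \eqref{addI}, dual to the decomposition carried out in Proposition \ref{p2}.
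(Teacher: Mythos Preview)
Your reduction to the defects $\delta_i = H_i(f,g)+H_i(g,h)-H_i(f,h)\ge 0$ is exactly the right idea, and the paper omits the proof as straightforward for precisely this reason. However, you stopped just short of actually writing down the final inequality, and at that point the argument breaks. With $S=H_1+H_2$ you correctly get
\[
|S(f,h)-S(f,g)-S(g,h)| = |{-\delta_1-\delta_2}| = \delta_1+\delta_2,
\]
but with $G=H_1-H_2$ the right-hand side is
\[
G(f,g)+G(g,h)-G(f,h) = \delta_1-\delta_2.
\]
Thus \eqref{add} would read $\delta_1+\delta_2 \le \delta_1-\delta_2$, i.e.\ $\delta_2\le 0$, which is \emph{not} a consequence of the triangle inequality and in fact fails whenever $\delta_2>0$. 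A two-point example with $H_1\equiv 0$ and $H_2$ the discrete metric already gives a counterexample.

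What is going on is that the statement as printed has the roles of $S$ and $G$ interchanged (compare with Proposition~\ref{p2}, where $H_1=G+S$ and $H_2=G-S$, so that $S$ is a \emph{difference} and $G$ a \emph{sum} of the $H_i$, consistently with the sentence preceding the proposition). With the intended assignment $S=H_1-H_2$ and $G=H_1+H_2$ your scheme works perfectly: the left-hand side becomes $|\delta_1-\delta_2|$, the right-hand side becomes $\delta_1+\delta_2$, and the inequality $|\delta_1-\delta_2|\le \delta_1+\delta_2$ is indeed immediate from $\delta_1,\delta_2\ge 0$. So your method is correct; you just need to carry the computation through explicitly rather than gesture at ``the triangle inequality'', which would have exposed the misprint in the statement.
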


In \cite{ja2} we described continuous solutions of \eqref{addI} in some classes of spaces. Let us quote \cite[Corollary 5]{ja2}.

For arbitrary function $H\colon X \times X \to \R$ define
$$\mathcal{H}(H) = \left\{  \f\colon X \to \R : \forall_{f, g \in X} \, \f(f)-\f(g) \leq H(f,g) \right\}.$$

\begin{cor}[\cite{ja2}, Corollary 5]\label{cT}
Assume that $X$ is a topological separable space and  $H\colon X \times X \to \R$ is a solution of 
\begin{equation}
\label{Ti}
H(f,h)\leq H(f,g)+ H(g,h), \quad f, g, h \in X 
\end{equation}
which is continuous  and equal to $0$ at every point of  $\Delta =\{ (f,f) : f \in X\}$.
Then 
\begin{equation}\label{repH}
H(f,g) = \sup \left\{ \f(f)-\f(g) : \f \in \mathcal{H}(H) \right\}, \quad f, g \in X.
\end{equation}
Conversely, for an arbitrary family $\mathcal{H}$ of real functions on $X$  every mapping $H\colon X \times X \to \R$ defined by \eqref{repH} solves \eqref{Ti}, it is equal to $0$ on $\Delta$ and  $\mathcal{H}\subseteq \mathcal{H}(H)$.
\end{cor}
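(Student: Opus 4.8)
The plan is to prove the two halves of Corollary~\ref{cT} separately. For the direct implication --- the representation \eqref{repH} --- one bound is immediate: by the definition of $\mathcal{H}(H)$ every $\f\in\mathcal{H}(H)$ satisfies $\f(f)-\f(g)\le H(f,g)$, so the supremum on the right-hand side of \eqref{repH} never exceeds $H(f,g)$. For the reverse bound the idea is to exhibit, for each fixed $g\in X$, a single competitor attaining the value $H(f,g)$: I would take $\f_g:=H(\cdot,g)$. Reading inequality \eqref{Ti} for the triple $(u,v,g)$ gives $H(u,g)\le H(u,v)+H(v,g)$, that is, $\f_g(u)-\f_g(v)\le H(u,v)$ for all $u,v\in X$, so $\f_g\in\mathcal{H}(H)$; and since $H$ vanishes on $\Delta$ we obtain $\f_g(f)-\f_g(g)=H(f,g)-H(g,g)=H(f,g)$. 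Hence the supremum is also at least $H(f,g)$, and \eqref{repH} follows. In passing, this shows $\mathcal{H}(H)\ne\emptyset$, so that the supremum is genuinely taken over a nonempty set.

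For the converse, let $\mathcal{H}$ be an arbitrary family of real functions on $X$ and let $H$ be defined by \eqref{repH}; I would check the three asserted properties in turn. The inclusion $\mathcal{H}\subseteq\mathcal{H}(H)$ is immediate, since for $\f\in\mathcal{H}$ the difference $\f(f)-\f(g)$ is one of the quantities over which the supremum in \eqref{repH} is taken, whence $\f(f)-\f(g)\le H(f,g)$. Specialising term by term to $f=g$ gives $H(f,f)=0$, i.e. $H$ vanishes on $\Delta$. Finally, for fixed $f,g,h$ and each $\f\in\mathcal{H}$ I would split $\f(f)-\f(h)=[\f(f)-\f(g)]+[\f(g)-\f(h)]\le H(f,g)+H(g,h)$, and since this bound does not depend on $\f$, passing to the supremum over $\f$ yields \eqref{Ti}.

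I do not expect a genuine obstacle here: once the canonical choice $\f_g=H(\cdot,g)$ is identified, the entire argument reduces to reading \eqref{Ti} in the two directions displayed above, with no limiting or topological step required. It is worth noting that the reasoning sketched above uses neither separability of $X$ nor continuity of $H$; these assumptions, present in the statement as it is recalled from \cite{ja2}, are not actually needed for the formulation given here.
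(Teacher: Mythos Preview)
Your argument is correct. Note, however, that the present paper does not prove this corollary at all: it is merely quoted verbatim from \cite{ja2}, so there is no ``paper's own proof'' to compare against here. Judged on its own, your proof is clean and complete: the key step is the choice of the witness $\f_g:=H(\cdot,g)$, which \eqref{Ti} together with $H(g,g)=0$ shows to lie in $\mathcal{H}(H)$ and to realise the value $H(f,g)$; the converse direction is the routine verification you describe. Your closing remark is also well taken: as the statement is formulated here (with $\mathcal{H}(H)$ consisting of \emph{all} real functions $\f$ satisfying $\f(f)-\f(g)\le H(f,g)$, no continuity imposed), neither separability of $X$ nor continuity of $H$ is used --- only the vanishing of $H$ on $\Delta$ and inequality \eqref{Ti} enter. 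Presumably in \cite{ja2} these topological hypotheses serve an adjacent purpose (e.g.\ restricting $\mathcal{H}(H)$ to continuous competitors, or deducing the result from a more general statement that does require them), but for the bare assertion recalled here your elementary argument suffices. One minor caveat for the converse: for $H$ to be a genuine map into $\R$ one needs the family $\mathcal{H}$ to be nonempty and the supremum to be finite at each pair $(f,g)$; this is implicit in the hypothesis ``$H\colon X\times X\to\R$ defined by \eqref{repH}'', but it is worth saying explicitly.
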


On joining Proposition \ref{p2}, Proposition \ref{p3} and Corollary \ref{cT} we obtain the following corollary.

\begin{cor}\label{cH}
Assume that $X$ is a topological separable space, $S\colon X \times X \to \R$ and $G\colon X \times X \to \R$ satisfy inequality \eqref{add} for all $f, g, h \in X$, are continuous on the set $\Delta$ and $G(f,f)=0$ for every $f \in X$. Then
\begin{align}\nonumber
S(f,g) &= \sup \left\{ \f(f)-\f(g) : \f \in \mathcal{H}\(\frac12 (G+S)\) \right\} \\&- \sup \left\{ \g(f)-\g(g) : \g \in \mathcal{H}\(\frac12 (G-S)\) \right\}, \quad f, g \in X\nonumber
\end{align}
and
\begin{align}\nonumber
G(f,g) &= \sup \left\{ \f(f)-\f(g) : \f \in \mathcal{H}\(\frac12 (G+S)\) \right\} \\&+ \sup \left\{ \g(f)-\g(g) : \g \in \mathcal{H}\(\frac12 (G-S)\) \right\}, \quad f, g \in X.\nonumber
\end{align}

Conversely, for  arbitrary two families $\mathcal{H}_1, \mathcal{H}_2$ of real functions on $X$  every mappings $S\colon X \times X \to \R$, $G\colon X \times X \to \R$ defined by 
$$
S(f,g) = \sup \left\{ \f(f)-\f(g) : \f \in \mathcal{H}_1 \right\} - \sup \left\{ \g(f)-\g(g) : \g \in \mathcal{H}_2\right\}, \quad f, g \in X.
$$
and
$$
G(f,g) = \sup \left\{ \f(f)-\f(g) : \f \in \mathcal{H}_1 \right\} + \sup \left\{ \g(f)-\g(g) : \g \in \mathcal{H}_2\right\}, \quad f, g \in X.
$$
 solve \eqref{add}, both are equal to $0$ on $\Delta$, $\mathcal{H}_1\subseteq \mathcal{H}(\frac12 (G+S))$ and $\mathcal{H}_2\subseteq \mathcal{H}(\frac12 (G-S))$.
\end{cor}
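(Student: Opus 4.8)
The plan is to assemble the corollary from the three results already established: Proposition~\ref{p2} will reduce the direct implication to two applications of Corollary~\ref{cT} --- one to $\tfrac12(G+S)$ and one to $\tfrac12(G-S)$ --- and the converse will follow from the converse part of Corollary~\ref{cT} together with Proposition~\ref{p3}. The only step that genuinely needs to be checked is that these two auxiliary maps satisfy the hypotheses of Corollary~\ref{cT}.

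For the direct implication I would first record that $S$ vanishes on the diagonal: substituting $f=g=h$ in \eqref{add} gives $|S(f,f)|\le G(f,f)=0$, hence $S(f,f)=0$ for every $f\in X$. Together with the hypothesis $G(f,f)=0$ this shows that $H_1:=G+S$ and $H_2:=G-S$ are equal to $0$ on $\Delta$, while they are continuous on $\Delta$ since $S$ and $G$ are. By Proposition~\ref{p2} the maps $H_1,H_2$ solve \eqref{addI}, that is \eqref{Ti}; as \eqref{Ti} is preserved under multiplication by the positive constant $\tfrac12$, the maps $\tfrac12 H_1=\tfrac12(G+S)$ and $\tfrac12 H_2=\tfrac12(G-S)$ are continuous solutions of \eqref{Ti} vanishing on $\Delta$. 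Applying Corollary~\ref{cT} to each of them separately yields
\begin{align*}
\tfrac12(G+S)(f,g) &= \sup\{\f(f)-\f(g):\f\in\mathcal{H}(\tfrac12(G+S))\},\\
\tfrac12(G-S)(f,g) &= \sup\{\g(f)-\g(g):\g\in\mathcal{H}(\tfrac12(G-S))\}
\end{align*}
for all $f,g\in X$, and it then remains only to insert these identities into $S=\tfrac12 H_1-\tfrac12 H_2$ and $G=\tfrac12 H_1+\tfrac12 H_2$ (again Proposition~\ref{p2}) to obtain exactly the two displayed formulas for $S$ and $G$.

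For the converse, given arbitrary families $\mathcal{H}_1,\mathcal{H}_2$ of real functions on $X$, I would set $A(f,g):=\sup\{\f(f)-\f(g):\f\in\mathcal{H}_1\}$ and $B(f,g):=\sup\{\g(f)-\g(g):\g\in\mathcal{H}_2\}$ (real-valued, as is tacitly assumed already in the converse part of Corollary~\ref{cT}), so that $S=A-B$ and $G=A+B$. The converse part of Corollary~\ref{cT} tells us that $A$ and $B$ solve \eqref{Ti}, both vanish on $\Delta$, $\mathcal{H}_1\subseteq\mathcal{H}(A)$ and $\mathcal{H}_2\subseteq\mathcal{H}(B)$. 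Since $G+S=2A$ and $G-S=2B$, this gives at once $\mathcal{H}_1\subseteq\mathcal{H}(\tfrac12(G+S))$, $\mathcal{H}_2\subseteq\mathcal{H}(\tfrac12(G-S))$ and $S=G=0$ on $\Delta$. Finally, fix $f,g,h\in X$ and put $a:=A(f,g)+A(g,h)-A(f,h)\ge0$, $b:=B(f,g)+B(g,h)-B(f,h)\ge0$, these being nonnegative because $A$ and $B$ solve \eqref{Ti}; then $S(f,h)-S(f,g)-S(g,h)=b-a$ whereas $G(f,g)+G(g,h)-G(f,h)=a+b$, so that \eqref{add} reduces to the trivial inequality $|b-a|\le a+b$. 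This last computation is the one underlying Proposition~\ref{p3}.

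I do not expect a serious obstacle. The two points that require attention are, first, the identity $S|_\Delta=0$: without it $\tfrac12(G\pm S)$ need not vanish on $\Delta$ and Corollary~\ref{cT} could not be applied; and, second, the bookkeeping, namely remembering which of $\tfrac12(G+S)$, $\tfrac12(G-S)$ is fed into Corollary~\ref{cT} at each step and which signs occur in $S=\tfrac12 H_1-\tfrac12 H_2$, $G=\tfrac12 H_1+\tfrac12 H_2$. Apart from that, the argument is just the coordinatewise linear algebra of Propositions~\ref{p2} and~\ref{p3} plus one invocation of Corollary~\ref{cT} for each of the two summands.
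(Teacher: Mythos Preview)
Your proof is correct and follows exactly the route the paper indicates: the paper merely states that the corollary is obtained ``on joining Proposition~\ref{p2}, Proposition~\ref{p3} and Corollary~\ref{cT}'' without further detail, and you have supplied precisely those details, including the necessary verification that $S$ vanishes on $\Delta$. Your direct computation $|b-a|\le a+b$ in the converse part is also the right content of Proposition~\ref{p3} (note that the roles of $S$ and $G$ appear swapped in the paper's statement of that proposition).
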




\end{document}